\theoremstyle{plain}
\newtheorem{thm}{Theorem}[section]
\newtheorem{lem}[thm]{Lemma}
\newtheorem{prop}[thm]{Proposition}
\newtheorem{cor}[thm]{Corollary}
\newtheorem{exam}[thm]{Example}
\title[Pego theorem on compact groups]{Pego theorem on compact groups}
\author{Manoj Kumar}
\address{Department of Mathematics \endgraf Indian Institute of Science Bangalore \endgraf Bangalore - 560012 \endgraf India}
\email{manojk9t3@gmail.com}
\begin{document}
	
\begin{abstract}
	The Pego theorem characterizes the precompact subsets of the square-integrable functions on $\mathbb{R}^n$ via the Fourier transform. We prove the analogue of the Pego theorem on compact groups (not necessarily abelian).
\end{abstract}

\keywords{Compact group, Fourier transform, compactness}

\subjclass[2010]{Primary 43A30, 43A77; Secondary 22C05}

\maketitle

\section{Introduction}
Characterizing precompact subsets is one of the classical topics in function space theory. It is well known that the Arzel\`a-Ascoli theorem characterizes a precompact subset of space of continuous functions over compact Hausdorff space. Further, the celebrated Riesz-Kolmogorov theorem provides a characterization of precompact subsets of $L^p(\mathbb{R}^n).$ We refer \cite{HH} for a historical account of it. Weil \cite[Pg. 52]{W} extended it to the Lebesgue spaces over locally compact groups. See \cite{GP19} for its extension to the Banach function spaces over locally compact groups. 

In 1985, Pego \cite{P} used the Riesz-Kolmogorov theorem to find a  characterization of precompact subsets of $L^2(\mathbb{R}^n)$ via certain decay of the Fourier transform. It states as follows:

\begin{thm}\label{Pe}{\cite[Theorems 2 and 3]{P}} \mbox{ }
    Let $K$ be a bounded subset of $L^2(\mathbb{R}^n).$ Then, the following are equivalent:
    \begin{enumerate}[(i)]
        \item $K$ is precompact.
        \item $\int_{|x|>r}|f(x)|^2\,dx\rightarrow 0 \mbox{ and } \int_{|\xi|>r}|\widehat{f}(\xi)|^2\,d\xi\rightarrow 0$ as $r\rightarrow\infty,$ both uniformly for $f$ in $K.$
        \item $\int_{\mathbb{R}^n} |f(x+y)-f(x)|^2\,dx\rightarrow 0$ as $y\rightarrow 0,$ and $\int_{\mathbb{R}^n} |\widehat{f}(\xi+\omega)-\widehat{f}(\xi)|^2\,d\xi\rightarrow 0$ as $\omega\rightarrow 0,$ both uniformly for $f$ in $K.$
    \end{enumerate}
\end{thm}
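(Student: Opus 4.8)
The plan is to derive all three equivalences from the Riesz--Kolmogorov theorem together with the Plancherel identity, using that the Fourier transform interchanges translation with modulation. Write $\mathcal{F}f=\widehat f$ and recall that $\mathcal F$ is unitary on $L^2(\mathbb{R}^n)$ (so $K$ is precompact iff $\widehat K:=\{\widehat f:f\in K\}$ is precompact), that $\widehat{f(\cdot+y)}(\xi)=e^{2\pi i y\cdot\xi}\widehat f(\xi)$, and that $\widehat f(\cdot+\omega)=\widehat{e^{-2\pi i(\cdot)\cdot\omega}f}$; Plancherel turns the last two into
\[
\|f(\cdot+y)-f\|_2=\big\|(e^{2\pi i y\cdot\xi}-1)\widehat f(\xi)\big\|_2,\qquad \|\widehat f(\cdot+\omega)-\widehat f\|_2=\big\|(e^{-2\pi i x\cdot\omega}-1)f(x)\big\|_2 .
\]
For the bounded family $K$ (set $M:=\sup_{f\in K}\|f\|_2<\infty$) introduce four properties: $(A)$ $\int_{|x|>r}|f|^2\to0$ as $r\to\infty$, uniformly for $f\in K$; $(B)$ $\sup_{|y|<\delta}\|f(\cdot+y)-f\|_2\to0$ as $\delta\to0$, uniformly for $f\in K$; and $(A'),(B')$ the same two statements for $\widehat K$ in place of $K$. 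Then (ii) is precisely $(A)\wedge(A')$, (iii) is precisely $(B)\wedge(B')$, the Riesz--Kolmogorov theorem gives (i) $\Leftrightarrow(A)\wedge(B)$, and applying it to $\widehat K$ (which is precompact exactly when $K$ is) gives (i) $\Leftrightarrow(A')\wedge(B')$. So it suffices to establish the implications $(A')\Rightarrow(B)$ and $(B')\Rightarrow(A)$, together with their Fourier duals $(A)\Rightarrow(B')$ and $(B)\Rightarrow(A')$ (obtained by the identical arguments with the two displayed identities swapped).

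The implication $(A')\Rightarrow(B)$ is a routine splitting of the first identity: on $\{|\xi|\le r\}$ estimate $|e^{2\pi i y\cdot\xi}-1|\le 2\pi r|y|$, contributing at most $(2\pi r|y|M)^2$; on $\{|\xi|>r\}$ estimate $|e^{2\pi i y\cdot\xi}-1|\le 2$, contributing at most $4\int_{|\xi|>r}|\widehat f|^2$. Given $\varepsilon>0$, choose $r$ from $(A')$ to control the second term and then $|y|$ small to control the first, both uniformly in $f\in K$; this gives $(B)$, and $(A)\Rightarrow(B')$ follows verbatim from the second identity.

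The substantive point is $(B)\Rightarrow(A')$ (and dually $(B')\Rightarrow(A)$): equicontinuity of translation must be shown to force uniform decay of the Fourier transform. I would average over a small ball $B_\delta=B(0,\delta)$ in the $y$-variable. Since
\[
\frac{1}{|B_\delta|}\int_{B_\delta}\big|e^{2\pi i y\cdot\xi}-1\big|^2\,dy=2-2\,\mathrm{Re}\,\widehat{m_\delta}(\xi),
\]
where $m_\delta$ is the $L^1$-normalized indicator of $B_\delta$, and since $\widehat{m_\delta}(\xi)$ depends only on $\delta|\xi|$ and tends to $0$ as $\delta|\xi|\to\infty$ (Riemann--Lebesgue), there is a constant $R=R(n)$ with this average $\ge1$ for all $|\xi|\ge R/\delta$. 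Multiplying by $|\widehat f(\xi)|^2$, integrating over $\{|\xi|\ge R/\delta\}$, exchanging the order of integration and invoking the first displayed identity gives
\[
\int_{|\xi|\ge R/\delta}|\widehat f(\xi)|^2\,d\xi\le\frac{1}{|B_\delta|}\int_{B_\delta}\|f(\cdot+y)-f\|_2^2\,dy\le\sup_{|y|<\delta}\|f(\cdot+y)-f\|_2^2 .
\]
Given $\varepsilon>0$, $(B)$ provides $\delta$ making the right-hand side $<\varepsilon$ for every $f\in K$, hence $\int_{|\xi|>r}|\widehat f|^2<\varepsilon$ for all $f\in K$ and all $r\ge R/\delta$; this is $(A')$. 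The dual statement $(B')\Rightarrow(A)$ comes out the same way from the second identity.

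It remains to assemble the pieces. From (i): the Riesz--Kolmogorov theorem gives $(A)\wedge(B)$, and applied to $\widehat K$ it gives $(A')\wedge(B')$, so both (ii) and (iii) hold. Conversely, (ii) $=(A)\wedge(A')$ yields $(A)\wedge(B)$ by $(A')\Rightarrow(B)$, hence (i); and (iii) $=(B)\wedge(B')$ yields $(A)\wedge(B)$ by $(B')\Rightarrow(A)$, hence (i). Every ingredient other than the averaging lemma of the previous paragraph is either a cited black box (Riesz--Kolmogorov, Plancherel) or an elementary estimate, so I expect the averaging lemma to be the main obstacle; its only genuine input is the decay at infinity of $\widehat{m_\delta}$, which is exactly where the Euclidean structure enters. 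On a compact group I would expect the same skeleton to survive, with the role of this averaging played by an average over a shrinking neighbourhood basis of the identity and with the tightness conditions on the (now discrete) dual side replaced by smallness of the tails of the Fourier series.
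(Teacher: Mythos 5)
Your argument is correct and complete, but note that the paper never proves this statement: Theorem \ref{Pe} is quoted from Pego \cite{P} as motivation, and the only proofs in the paper concern the compact-group analogue (Lemmas \ref{decay}--\ref{equicont} and Theorem \ref{Pego}). Your route is essentially Pego's original one: Riesz--Kolmogorov applied to both $K$ and $\widehat{K}$ (legitimate since $\mathcal{F}$ is unitary, so precompactness transfers), the translation/modulation identities via Plancherel, the elementary splitting estimate for $(A')\Rightarrow(B)$ (where the hypothesis $\sup_{f\in K}\|f\|_2<\infty$ is genuinely used), and the ball-averaging lemma $\frac{1}{|B_\delta|}\int_{B_\delta}|e^{2\pi i y\cdot\xi}-1|^2\,dy\ge 1$ for $|\xi|\ge R/\delta$, which correctly converts uniform $L^2$-equicontinuity into uniform high-frequency decay; the Fubini step is justified by nonnegativity and the dual implications follow by symmetry. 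It is worth comparing this with how the paper handles the corresponding step on a compact group: in Lemma \ref{decay} the role of your averaging trick is played instead by convolving with a Dirac net $(e_U)$ and using the Riemann--Lebesgue lemma for $\widehat{e_U}$ together with the Hausdorff--Young and Minkowski integral inequalities, i.e.\ a smoothed version of the same idea that avoids any explicit computation with characters (which would be unavailable in the nonabelian setting); your splitting argument for $(A')\Rightarrow(B)$ corresponds to Lemma \ref{equicont}, where the finitely many exceptional representations $\pi\in A$ and continuity of each $\pi$ replace the bound $|e^{2\pi iy\cdot\xi}-1|\le 2\pi r|y|$ on a bounded frequency region. Two structural differences your closing remark should take into account: on a compact group the tightness conditions $(A)$, $(A')$ on the function side are vacuous (the paper's Proposition \ref{precomProp} notes $\sup_{f\in K}\|f\chi_{G\setminus G}\|_2=0$), and the dual-side decay becomes smallness of tails over all but finitely many $\pi\in\widehat{G}$, exactly as you predict; moreover the paper's Corollary \ref{CharcCor} obtains the equicontinuity--decay equivalence without any boundedness assumption, whereas your $(A')\Rightarrow(B)$ step on $\mathbb{R}^n$ needs $M<\infty$ because of the low-frequency region.
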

An application of this theorem to information theory has also been provided in \cite{P}. 

Pego type theorem has also been studied via the short-time Fourier and wavelet transforms \cite{DFG}, the Laplace transform \cite{K20} and the Laguerre and Hankel transforms \cite{hor}. The Pego theorem has been extended to the locally compact abelian groups with some technical assumptions \cite{G}. Using the Pontryagin duality and the Arzela-Ascoli theorem, the authors in \cite{ GK16} showed that the technical assumptions are redundant. For $L^1$-space analogue of Pego type theorem over locally compact abelian groups, see \cite{K}. 

In Section \ref{FA}, we present preliminaries on compact groups. In Section \ref{PT}, using Weil's compactness theorem, we extend the Theorem \ref{Pe} to compact groups (not necessarily abelian); see Theorem \ref{Pego}. 

\section{Fourier analysis on compact groups}\label{FA}

Let $G$ be a compact Hausdorff group. Assume that $m_G$ denotes the normalized positive Haar measure on $G.$ Let $L^p(G)$ denote the $p$th Lebesgue space w.r.t. the measure $m_G.$ The norm on the space $L^p(G)$ is denoted by $\|\cdot\|_p.$

We denote by $\widehat{G}$ the space consisting of all irreducible unitary representations of $G$ up to the unitary equivalence. The set $\widehat{G}$ is known as the unitary dual of $G$ and is equipped with the discrete topology. Note that the representation space $\mathcal{H}_\pi$ of $\pi\in\widehat{G}$ is a complex Hilbert space and finite-dimensional. Denote by $d_\pi$ the dimension of $\mathcal{H}_\pi.$ 

Let $\Lambda\subset\widehat{G}.$ Assume that $\{(X_\pi,\|\cdot\|_\pi): \pi\in\Lambda\}$ is a family of Banach spaces. For $1\leq p<\infty,$ we denote by $\ell^p\mbox{-}\underset{\pi\in\wedge}{\oplus}X_\pi$ the Banach space $$\left\{(x_\pi)\in\underset{\pi\in\wedge}{\Pi}X_\pi:\underset{\pi\in\wedge}{\sum}d_\pi\|x_\pi\|_\pi^p<\infty\right\}$$ endowed with the norm $\|(x_\pi)\|_{\ell^p\mbox{-}\underset{\pi\in\wedge}{\oplus}X_\pi}:=\left(\underset{\pi\in\wedge}{\sum}d_\pi\|x_\pi\|_\pi^p\right)^{1/p}.$ Denote by $\ell^\infty\mbox{-}\underset{\pi\in\wedge}{\oplus}X_\pi$ the Banach space $$\left\{(x_\pi)\in\underset{\pi\in\wedge}{\Pi}X_\pi:\underset{\pi\in\wedge}{\sup}\|x_\pi\|_\pi<\infty\right\}$$ endowed with the norm $\|(x_\pi)\|_{\ell^\infty\mbox{-}\underset{\pi\in\wedge}{\oplus}X_\pi}:=\underset{\pi\in\wedge}{\sup}\|x_\pi\|_\pi.$ Similarly, denote by $c_0\mbox{-}\underset{\pi\in\wedge}{\oplus}X_\pi$ the space consisting of $(x_\pi)$ from $\ell^\infty\mbox{-}\underset{\pi\in\wedge}{\oplus}X_\pi$ such that $x_\pi\rightarrow 0$ as $\pi\rightarrow\infty,$ i.e., for any given $\epsilon>0$ there exists a finite set $\Lambda_\epsilon\subset\Lambda$ such that $\|x_\pi\|_\pi<\epsilon$ for all $\pi\in\Lambda\setminus\Lambda_\epsilon.$ Note that $c_0\mbox{-}\underset{\pi\in\wedge}{\oplus}X_\pi$ is a closed subspace of $\ell^\infty\mbox{-}\underset{\pi\in\wedge}{\oplus}X_\pi.$

For $1\leq p<\infty,$ let $\mathcal{B}_p(\mathcal{H}_\pi)$ denote the space of all bounded linear operators $T$ on $\mathcal{H}_\pi$ such that $\|T\|_{\mathcal{B}_p(\mathcal{H}_\pi)}:=(\mathrm{tr}(|T|^p))^{1/p}<\infty.$ The space $\mathcal{B}_2(\mathcal{H}_\pi)$ is called the space of the Hilbert-Schmidt operators on the Hilbert space $\mathcal{H}_\pi.$ The space $\mathcal{B}_2(\mathcal{H}_\pi)$ is Hilbert space endowed with the inner product $$\langle T,S \rangle_{\mathcal{B}_2(\mathcal{H}_\pi)}:=\mathrm{tr}(TS^*).$$ Further, let $\mathcal{B}(\mathcal{H}_\pi)$ denote the space consisting of all bounded linear operators on $\mathcal{H}_\pi$ endowed with the operator norm. 

Let $f\in L^1(G).$ The Fourier transform of $f$ is defined by $$\widehat{f}(\pi)=\int_Gf(t)\pi(t)^* \,dm_G(t),\ \pi\in\widehat{G}.$$ The Fourier transform operator $f\mapsto\widehat{f}$ from $L^1(G)$ into $\ell^\infty\mbox{-}\underset{\pi\in\widehat{G}}{\oplus}\mathcal{B}(\mathcal{H}_\pi)$ is injective and bounded. By the Riemann-Lebesgue Lemma, we know that $\widehat{f}\in c_0\mbox{-}\underset{\pi\in\widehat{G}}{\oplus}\mathcal{B}(\mathcal{H}_\pi).$ The convolution of $f,g\in L^1(G)$ is given by $$f*g(x)=\int_G f(xy^{-1})g(y)\,dm_G(y).$$ Then, $\widehat{f*g}(\pi)=\widehat{g}(\pi)\widehat{f}(\pi),\ \pi\in\widehat{G}.$  For $y\in G$, the right translation $R_y$ of $f\in L^p(G)$ is given by $R_y(f)(x)=f(xy),\ x\in G.$ Then, $\widehat{R_yf}(\pi)=\pi(y)\widehat{f}(\pi),\ \pi\in\widehat{G}.$

For more information on compact groups, we refer to \cite{F, HR}.

Throughout the paper, $G$ will denote a compact Hausdorff group (not necessarily abelian). The identity of $G$ is denoted by $e.$ We will denote by $I_{d_\pi}$ the $d_\pi\times d_\pi$ identity matrix.

\section{Pego theorem on compact groups}\label{PT}

In this section, we discuss the characterization of precompact subsets of square-integrable functions on $G$ in terms of the Fourier transform. We need the following definitions.

Let $K\subset L^p(G).$ Define $\widehat{K}:=\{\widehat{f}:f\in L^p(G)\}.$ $K$ is said to be {\it uniformly $L^p(G)$-equicontinuous} if for any given $\epsilon>0$ there exists an open neighbourhood $O$ of $e$ such that $$\|R_yf-f\|_p<\epsilon,\ f\in K \mbox{ and } y\in O.$$ 

Let $F\subset\ell^p\mbox{-}\underset{\pi\in\widehat{G}}{\oplus}\mathcal{B}_p(\mathcal{H}_\pi).$ $F$ is said to have {\it uniform $\ell^p\mbox{-}\underset{\pi\in\widehat{G}}{\oplus}\mathcal{B}_p(\mathcal{H}_\pi)$-decay} if for any given $\epsilon>0$ there exists a finite set $A\subset\widehat{G}$ such that $$\|\phi\|_{\ell^p\mbox{-}\underset{\pi\in\widehat{G}\setminus A}{\oplus}\mathcal{B}_p(\mathcal{H}_\pi)}<\epsilon,\ \phi\in F.$$

Let us begin with some important lemmas.

\begin{lem}\label{decay}
	Let $K\subset L^p(G),$ where $p\in[1,2].$ If $K$ is uniformly $L^p(G)$-equicontinuous then $\widehat{K}$ has uniform $\ell^{p'}\mbox{-}\underset{\pi\in\widehat{G}}{\oplus}\mathcal{B}_{p'}(\mathcal{H}_\pi)$-decay.
\end{lem}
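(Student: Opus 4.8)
The plan is to pass from the equicontinuity hypothesis to a decay statement on the Fourier side by combining the translation identity $\widehat{R_yf}(\pi)=\pi(y)\widehat{f}(\pi)$ with the Hausdorff--Young inequality. The key point is that, for $p\in[1,2]$ with conjugate exponent $p'$, the Hausdorff--Young inequality on $G$ states that $\|\widehat{g}\|_{\ell^{p'}\text{-}\oplus\mathcal{B}_{p'}(\mathcal{H}_\pi)}\le\|g\|_p$ for all $g\in L^p(G)$; applying this to $g=R_yf-f$ yields
\[
\left(\sum_{\pi\in\widehat{G}}d_\pi\,\big\|\pi(y)\widehat{f}(\pi)-\widehat{f}(\pi)\big\|_{\mathcal{B}_{p'}(\mathcal{H}_\pi)}^{p'}\right)^{1/p'}=\|\widehat{R_yf}-\widehat{f}\|_{\ell^{p'}\text{-}\oplus\mathcal{B}_{p'}(\mathcal{H}_\pi)}\le\|R_yf-f\|_p.
\]
So uniform $L^p$-equicontinuity of $K$ gives: for every $\epsilon>0$ there is a neighbourhood $O$ of $e$ with $\sum_{\pi}d_\pi\|(\pi(y)-I_{d_\pi})\widehat{f}(\pi)\|_{\mathcal{B}_{p'}(\mathcal{H}_\pi)}^{p'}<\epsilon^{p'}$ for all $f\in K$ and $y\in O$.

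Next I would convert this "tail-in-$y$" smallness into a "tail-in-$\pi$" smallness. The idea is that for a fixed nontrivial $\pi$, the operator $\pi(y)$ cannot stay close to the identity $I_{d_\pi}$ as $y$ ranges over a neighbourhood of $e$ — more precisely, one wants a lower bound of the form $\int_G\|(\pi(y)-I_{d_\pi})T\|_{\mathcal{B}_{p'}}^{p'}\,dm_G(y)\ge c\,\|T\|_{\mathcal{B}_{p'}}^{p'}$ for $\pi\ne\mathbf{1}$, with $c$ independent of $\pi$. Such a bound follows from the fact that $\int_G\pi(y)\,dm_G(y)=0$ for $\pi\ne\mathbf 1$ (orthogonality of matrix coefficients), so $\pi(y)T$ has "average zero" while $I_{d_\pi}T=T$ is constant; expanding $\|\pi(y)T-T\|_{\mathcal{B}_2}^2$ in the Hilbert--Schmidt case gives exactly $2\|T\|_{\mathcal{B}_2}^2$, and for general $p'\in[2,\infty]$ one uses the ordering of Schatten norms together with the unitarity of $\pi(y)$ to get a uniform constant. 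Averaging the displayed inequality over $y\in O$ (using $m_G(O)>0$) and applying this lower bound term-by-term then yields $\sum_{\pi\ne\mathbf 1}d_\pi\|\widehat{f}(\pi)\|_{\mathcal{B}_{p'}}^{p'}\le C\,m_G(O)^{-1}\epsilon^{p'}$ — but that is a bound on the \emph{whole} tail, not on a cofinite tail, which is not yet what we need.

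The genuine obstacle is therefore the last step: from a small bound on $\sum_{\pi\ne\mathbf 1}d_\pi\|\widehat{f}(\pi)\|_{\mathcal{B}_{p'}}^{p'}$ (uniform over $f\in K$) I must produce, for each $\epsilon$, a \emph{finite} set $A\subset\widehat{G}$ with $\sum_{\pi\notin A}d_\pi\|\widehat{f}(\pi)\|_{\mathcal{B}_{p'}}^{p'}<\epsilon^{p'}$ uniformly. If $K$ is merely equicontinuous this is false in general (translates of a single bump can have Fourier mass escaping to infinity is not an issue, but a discrete non-equicontinuous phenomenon could be), so one must use the hypothesis more carefully: the correct route is to fix $\epsilon$, choose $O$ as above for the value $\epsilon/2$, then note that $\widehat{G}$ is discrete and the relevant quantity $\sum_{\pi}d_\pi\|(\pi(y)-I_{d_\pi})\widehat{f}(\pi)\|^{p'}$ controls, for the \emph{particular} neighbourhood $O$, all but finitely many $\pi$ with a \emph{uniform} constant because on the cofinite set $\{\pi:\ \inf_{y\in O}\|\pi(y)-I_{d_\pi}\|_{\mathrm{op}}\ge 1\}$ the coefficient is bounded below by $1$; one must check this set is cofinite, which is where the discreteness of $\widehat G$ and the fact that $O$ is a fixed neighbourhood enter. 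Concretely: the set of $\pi$ for which $\pi(O)$ lies in a small operator-norm ball around $I_{d_\pi}$ is finite (otherwise one contradicts that $\widehat G$ separates points / the regular representation has no almost-invariant vectors under a fixed neighbourhood on an infinite part of the dual). Granting that, averaging over $y\in O$ and restricting the sum to this cofinite $A^c$ gives the desired uniform decay, with the finite exceptional set $A$ depending only on $\epsilon$.

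I expect the main difficulty to be making the cofiniteness claim precise and quantitative — i.e. showing that for a fixed open $O\ni e$ the set $\{\pi\in\widehat G:\ \sup_{y\in O}\|\pi(y)-I_{d_\pi}\|_{\mathrm{op}}<1\}$ is finite — and then tracking that the resulting constant is independent of $\pi$ across all Schatten exponents $p'\in[2,\infty]$; once that structural fact is in hand the rest is Hausdorff--Young plus an averaging argument.
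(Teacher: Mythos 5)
Your opening step (Hausdorff--Young applied to $R_yf-f$ together with equicontinuity, via $\widehat{R_yf}(\pi)=\pi(y)\widehat{f}(\pi)$) is correct, but the step that must actually produce the finite set $A$ is where the argument breaks down. The termwise lower bound you propose is false: knowing $\|\pi(y)-I_{d_\pi}\|_{\mathcal{B}(\mathcal{H}_\pi)}\ge 1$ for $y\in O$ gives no inequality of the form $\|(\pi(y)-I_{d_\pi})\widehat{f}(\pi)\|_{\mathcal{B}_{p'}(\mathcal{H}_\pi)}\ge c\,\|\widehat{f}(\pi)\|_{\mathcal{B}_{p'}(\mathcal{H}_\pi)}$, because $\pi(y)-I_{d_\pi}$ is typically singular ($\pi(y)$ usually has eigenvalue $1$ when $d_\pi$ is large), and $\widehat{f}(\pi)$ may map into its kernel; largeness of an operator norm is not a lower bound for the operator acting on the left. (Note also that the set you call cofinite, $\{\pi:\inf_{y\in O}\|\pi(y)-I_{d_\pi}\|\ge 1\}$, is not the complement of $\{\pi:\sup_{y\in O}\|\pi(y)-I_{d_\pi}\|<1\}$.) Your averaging variant has a domain mismatch: the identity $\int_G\|(\pi(y)-I_{d_\pi})T\|_{\mathcal{B}_2}^2\,dm_G(y)=2\|T\|_{\mathcal{B}_2}^2$ integrates over all of $G$, whereas equicontinuity controls only $y\in O$; and your diagnosis of why that route fails is off --- a uniform bound on $\sum_{\pi\ne\mathbf{1}}d_\pi\|\widehat{f}(\pi)\|^{p'}$ would already be a cofinite-tail bound (take $A=\{\mathbf{1}\}$); the real obstruction is that the constant $m_G(O)^{-1}$ blows up as $\epsilon\to 0$, since $O$ depends on $\epsilon$.

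The missing idea is to average $\pi(y)$ over a small set near $e$ and apply the Riemann--Lebesgue lemma to the resulting $L^1$ function, rather than a pointwise spectral argument. Concretely (this is the paper's proof): take a Dirac net $(e_U)$, choose $U$ inside the neighbourhood furnished by equicontinuity, and use that $\widehat{e_U}\in c_0\mbox{-}\underset{\pi\in\widehat{G}}{\oplus}\mathcal{B}(\mathcal{H}_\pi)$, so there is a \emph{finite} set $A$ with $\|\widehat{e_U}(\pi)\|_{\mathcal{B}(\mathcal{H}_\pi)}\le\tfrac12$ off $A$; then on $\widehat{G}\setminus A$ one absorbs, $\|\widehat{f}\|\le 2\|\widehat{f-f*e_U}\|$, and Hausdorff--Young plus Minkowski's integral inequality bound this by $2\sup_{y\in U}\bigl(\int_G|f(x)-f(xy^{-1})|^p\,dm_G(x)\bigr)^{1/p}$, which is small uniformly on $K$. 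Your $p'=2$ computation with $\int_O\pi(y)\,dm_G(y)$ is the same mechanism in disguise, but the uniform constant and the finite exceptional set must come from Riemann--Lebesgue applied to $\chi_O/m_G(O)$ (equivalently to $e_U$), not from a claimed cofiniteness of $\{\pi:\sup_{y\in O}\|\pi(y)-I_{d_\pi}\|<1\}$ combined with a termwise bound; as written, that step is a genuine gap.
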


\begin{proof}
	Let $(e_U)_{U\in\Lambda}$ be a Dirac net on $G$; see \cite[Pg. 28]{D}. By the Riemann-Lebesgue Lemma \cite[Theorem 28.40]{HR},  $\widehat{e_U}\in c_0\mbox{-}\underset{\pi\in\widehat{G}}{\oplus}\mathcal{B}(\mathcal{H}_\pi).$ Then, there exists a finite set $A\subset\widehat{G}$ such that $$\|\widehat{e_U}(\pi)\|_{\mathcal{B}(\mathcal{H}_\pi)}\leq\frac{1}{2},\ \pi\in\widehat{G}\setminus A.$$
	
	Let $f\in K.$ We denote by $\widehat{e_U}\widehat{f}$ the pointwise product of $\widehat{e_U}$ and $\widehat{f}.$ Now, 
	\begin{align*}
		\|\widehat{f}\|_{\ell^{p'}\mbox{-}\underset{\pi\in\widehat{G}\setminus A}{\oplus}\mathcal{B}_{p'}(\mathcal{H}_\pi)}\leq&\|\widehat{f}-\widehat{e_U}\widehat{f}\|_{\ell^{p'}\mbox{-}\underset{\pi\in\widehat{G}\setminus A}{\oplus}\mathcal{B}_{p'}(\mathcal{H}_\pi)}+\|\widehat{e_U}\widehat{f}\|_{\ell^{p'}\mbox{-}\underset{\pi\in\widehat{G}\setminus A}{\oplus}\mathcal{B}_{p'}(\mathcal{H}_\pi)}\\\leq&\|\widehat{f-f*e_U}\|_{\ell^{p'}\mbox{-}\underset{\pi\in\widehat{G}\setminus A}{\oplus}\mathcal{B}_{p'}(\mathcal{H}_\pi)}+\|\widehat{f}\|_{\ell^{p'}\mbox{-}\underset{\pi\in\widehat{G}\setminus A}{\oplus}\mathcal{B}_{p'}(\mathcal{H}_\pi)}\sup_{\pi\in\widehat{G}\setminus A}\|\widehat{e_U}(\pi)\|_{\mathcal{B}(\mathcal{H}_\pi)}\\\leq&\|\widehat{f-f*e_U}\|_{\ell^{p'}\mbox{-}\underset{\pi\in\widehat{G}\setminus A}{\oplus}\mathcal{B}_{p'}(\mathcal{H}_\pi)}+\frac{1}{2}\|\widehat{f}\|_{\ell^{p'}\mbox{-}\underset{\pi\in\widehat{G}\setminus A}{\oplus}\mathcal{B}_{p'}(\mathcal{H}_\pi)}.
	\end{align*}
	
	Then, applying the Hausdorff-Young inequality \cite[Theorem 31.22]{HR}, we get
	\begin{align*}
		\|\widehat{f}\|_{\ell^{p'}\mbox{-}\underset{\pi\in\widehat{G}\setminus A}{\oplus}\mathcal{B}_{p'}(\mathcal{H}_\pi)}\leq&2\|\widehat{f-f*e_U}\|_{\ell^{p'}\mbox{-}\underset{\pi\in\widehat{G}}{\oplus}\mathcal{B}_{p'}(\mathcal{H}_\pi)}\\\leq&2\|f-f*e_U\|_p\\=&2\left(\int_G|f(x)-f*e_U(x)|^p\,dm_G(x)\right)^{1/p}\\=&2\left(\int_G\left|\int_G(f(x)-f(xy^{-1}))e_U(y)\,dm_G(y)\right|^p\,dm_G(x)\right)^{1/p}.
	\end{align*}
	Therefore, using the Minkowski integral inequality, we obtain
	\begin{align*}
	\|\widehat{f}\|_{\ell^{p'}\mbox{-}\underset{\pi\in\widehat{G}\setminus A}{\oplus}\mathcal{B}_{p'}(\mathcal{H}_\pi)}\leq&2\int_G\left(\int_G|f(x)-f(xy^{-1})|^p\,dm_G(x)\right)^{1/p} e_U(y)\,dm_G(y)\\\leq&2\sup_{y\in U}\left(\int_G|f(x)-f(xy^{-1})|^p\,dm_G(x)\right)^{1/p}.
	\end{align*}

	Let $\epsilon>0.$ Since $K$ is uniformly $L^p(G)$-equicontinuous, there exists an open neighbourhood $O$ of $e$ such that $$\|R_yf-f\|_p<\frac{\epsilon}{2},\ f\in K \mbox{ and } y\in O.$$ By \cite[Proposition 2.1 (b)]{F}, we get that there exists $U\in\Lambda$ such that $$\|R_yf-f\|_p<\frac{\epsilon}{2},\ f\in K \mbox{ and } y\in U.$$ Hence, 
	\begin{align*}
		\|\widehat{f}\|_{\ell^{p'}\mbox{-}\underset{\pi\in\widehat{G}\setminus A}{\oplus}\mathcal{B}_{p'}(\mathcal{H}_\pi)}<&\epsilon,\ f\in K.\qedhere
	\end{align*}
\end{proof}

\begin{lem}\label{equicont}
	Let $K\subset L^{p'}(G),$ where $p\in[1,2].$ If $\widehat{K}$ has uniform $\ell^p\mbox{-}\underset{\pi\in\widehat{G}}{\oplus}\mathcal{B}_p(\mathcal{H}_\pi)$-decay then $K$ is uniformly $L^{p'}(G)$-equicontinuous.
\end{lem}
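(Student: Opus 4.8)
The plan is to reverse the computation of Lemma \ref{decay}, using the inverse (dual) form of the Hausdorff--Young inequality in place of the direct one. By duality from \cite[Theorem 31.22]{HR}, the inverse Fourier transform maps $\ell^p\mbox{-}\underset{\pi\in\widehat{G}}{\oplus}\mathcal{B}_p(\mathcal{H}_\pi)$ contractively into $L^{p'}(G)$. Fix $f\in K$ and $y\in G$. Since $\widehat{K}$ has uniform $\ell^p$-decay we have $\widehat{f}\in\ell^p\mbox{-}\underset{\pi\in\widehat{G}}{\oplus}\mathcal{B}_p(\mathcal{H}_\pi)$; as $\pi(y)$ is unitary, $\widehat{R_yf}(\pi)=\pi(y)\widehat{f}(\pi)$ lies in the same space with the same norm (invariance of the Schatten norms), hence so does $\widehat{R_yf-f}(\pi)=(\pi(y)-I_{d_\pi})\widehat{f}(\pi)$, and since the Fourier transform is injective on $L^1(G)\supseteq L^{p'}(G)$, the $L^{p'}$-function produced by the inverse Hausdorff--Young inequality is $R_yf-f$ itself. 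Therefore
\[
\|R_yf-f\|_{p'}\le\Big(\sum_{\pi\in\widehat{G}}d_\pi\,\big\|(\pi(y)-I_{d_\pi})\widehat{f}(\pi)\big\|_{\mathcal{B}_p(\mathcal{H}_\pi)}^{\,p}\Big)^{1/p},
\]
and the strategy is to make the right-hand side small, uniformly in $f\in K$, by splitting the sum over a finite set $A\subset\widehat{G}$ and its complement.

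Fix $\epsilon>0$. On the complement $\widehat{G}\setminus A$ one uses only unitarity: $\|(\pi(y)-I_{d_\pi})\widehat{f}(\pi)\|_{\mathcal{B}_p(\mathcal{H}_\pi)}\le\|\pi(y)-I_{d_\pi}\|_{\mathcal{B}(\mathcal{H}_\pi)}\,\|\widehat{f}(\pi)\|_{\mathcal{B}_p(\mathcal{H}_\pi)}\le 2\|\widehat{f}(\pi)\|_{\mathcal{B}_p(\mathcal{H}_\pi)}$, so the $\widehat{G}\setminus A$ part of the sum is at most $2\|\widehat{f}\|_{\ell^p\mbox{-}\underset{\pi\in\widehat{G}\setminus A}{\oplus}\mathcal{B}_p(\mathcal{H}_\pi)}$, which by the uniform $\ell^p$-decay is $<\epsilon/2$ for every $f\in K$ once $A$ is chosen large enough; this choice is independent of $y$. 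On the finite head $A$ I would use the Schatten-norm estimates $\|(\pi(y)-I_{d_\pi})\widehat{f}(\pi)\|_{\mathcal{B}_p(\mathcal{H}_\pi)}\le\|\pi(y)-I_{d_\pi}\|_{\mathcal{B}(\mathcal{H}_\pi)}\,d_\pi^{1/p}\|\widehat{f}(\pi)\|_{\mathcal{B}(\mathcal{H}_\pi)}$ together with $\|\widehat{f}(\pi)\|_{\mathcal{B}(\mathcal{H}_\pi)}\le\|f\|_1\le\|f\|_{p'}\le M$, where $M:=\sup_{f\in K}\|f\|_{p'}<\infty$ (here one uses that $K$ is bounded and $m_G$ is a probability measure); summing over the finite set $A$ bounds the head by $M\big(\sum_{\pi\in A}d_\pi^2\big)^{1/p}\max_{\pi\in A}\|\pi(y)-I_{d_\pi}\|_{\mathcal{B}(\mathcal{H}_\pi)}$. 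Since $A$ is finite and each $\pi$ is a continuous finite-dimensional representation, $\max_{\pi\in A}\|\pi(y)-I_{d_\pi}\|_{\mathcal{B}(\mathcal{H}_\pi)}\to 0$ as $y\to e$, so there is an open neighbourhood $O$ of $e$ on which the head is $<\epsilon/2$ for all $y\in O$, uniformly in $f$. Adding the two estimates gives $\|R_yf-f\|_{p'}<\epsilon$ for all $f\in K$ and $y\in O$, which is precisely uniform $L^{p'}(G)$-equicontinuity.

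The step I expect to be the main obstacle is the head estimate, because uniform $\ell^p$-decay by itself controls only the tail of $\widehat{f}$: one must secure a uniform bound on $\|\widehat{f}(\pi)\|_{\mathcal{B}_p(\mathcal{H}_\pi)}$ over $f\in K$ for the finitely many $\pi\in A$, and this is exactly where the bound $\sup_{f\in K}\|f\|_{p'}<\infty$ enters. A secondary, more technical point is the rigorous justification of the inverse Hausdorff--Young reduction --- verifying that $\widehat{R_yf-f}$ genuinely lies in $\ell^p\mbox{-}\underset{\pi\in\widehat{G}}{\oplus}\mathcal{B}_p(\mathcal{H}_\pi)$ and that its inverse transform is $R_yf-f$ --- which, however, reduces via unitary invariance of the Schatten norms, the inclusion $L^{p'}(G)\subset L^1(G)$, and injectivity of the Fourier transform, to the results recalled in Section \ref{FA}.
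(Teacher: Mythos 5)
Your argument is, in outline, the paper's own proof: bound $\|R_yf-f\|_{p'}$ by $\|\widehat{R_yf-f}\|_{\ell^p\mbox{-}\underset{\pi\in\widehat{G}}{\oplus}\mathcal{B}_p(\mathcal{H}_\pi)}$ via the dual Hausdorff--Young inequality (the paper cites \cite[Corollary 31.25]{HR}, which is exactly the inverse-transform statement you recover by duality plus injectivity of the Fourier transform), split the sum at a finite set $A$ furnished by the uniform decay hypothesis, dispose of the tail using $\|\pi(y)-I_{d_\pi}\|_{\mathcal{B}(\mathcal{H}_\pi)}\le 2$ together with the uniform tail bound, and dispose of the head using continuity at $e$ of the finitely many $\pi\in A$. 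Your individual estimates (the Schatten bounds $\|ST\|_{\mathcal{B}_p}\le\|S\|_{\mathcal{B}}\|T\|_{\mathcal{B}_p}$ and $\|T\|_{\mathcal{B}_p}\le d_\pi^{1/p}\|T\|_{\mathcal{B}}$, the bound $\|\widehat f(\pi)\|_{\mathcal{B}(\mathcal{H}_\pi)}\le\|f\|_1\le\|f\|_{p'}$ on a probability space, and the subadditivity of $t\mapsto t^{1/p}$ used to split the sum) are all correct.

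The one substantive divergence is the head estimate, and you have put your finger on the real crux. You control the head by $M:=\sup_{f\in K}\|f\|_{p'}$, i.e.\ you assume $K$ is bounded, which is \emph{not} a hypothesis of the lemma; so, strictly, your argument proves the lemma only for bounded $K$. The paper instead simply introduces ``a positive number $M$'' with $\bigl(\sum_{\pi\in A}d_\pi\|\widehat f(\pi)\|_{\mathcal{B}_p(\mathcal{H}_\pi)}^p\bigr)^{1/p}\le M$, without justifying that $M$ can be chosen uniformly over $f\in K$ --- and it cannot in general, since uniform decay constrains only tails. Indeed the lemma as stated fails without some uniform bound on the head coefficients: take $G=\mathbb{T}$, $p=p'=2$, $\chi$ a nontrivial character and $K=\{n\chi:n\in\mathbb{N}\}$; then every $\widehat{n\chi}$ is supported on the single point $\{\chi\}$, so $\widehat K$ trivially has uniform $\ell^2$-decay, yet $\|R_y(n\chi)-n\chi\|_2=n|\chi(y)-1|$ is not uniformly small on any neighbourhood of $e$. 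So your extra hypothesis (or a substitute such as $\sup_{f\in K}\|\widehat f(\pi)\|_{\mathcal{B}_p(\mathcal{H}_\pi)}<\infty$ for each fixed $\pi$) is genuinely needed rather than a defect of your write-up; it is available wherever the lemma is used in the paper (Proposition~\ref{precomProp} and Theorem~\ref{Pego} assume $K$ bounded), though it does mean the unbounded-case claim surrounding Corollary~\ref{CharcCor} should be read with care, the converse direction (Lemma~\ref{decay}) being the only one that is boundedness-free.
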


\begin{proof}Let $\epsilon>0.$ Since $\widehat{K}$ has uniform $\ell^p\mbox{-}\underset{\pi\in\widehat{G}}{\oplus}\mathcal{B}_p(\mathcal{H}_\pi)$-decay, there exists a finite set $A\subset\widehat{G}$ such that $$\|\widehat{f}\|_{\ell^p\mbox{-}\underset{\pi\in\widehat{G}\setminus A}{\oplus}\mathcal{B}_p(\mathcal{H}_\pi)}<\frac{\epsilon}{4},\ f\in K.$$
	
	Let $f\in K$ and $y\in G.$ Then, applying \cite[Corollary 31.25]{HR}, we obtain 
	\begin{align*}
	\|R_yf-f\|_{p'}\leq&\|\widehat{R_yf-f}\|_{\ell^p\mbox{-}\underset{\pi\in\widehat{G}}{\oplus}\mathcal{B}_p(\mathcal{H}_\pi)}\\=&\left(\sum_{\pi\in\widehat{G}}d_\pi\|\widehat{R_yf}(\pi)-\widehat{f}(\pi)\|_{\mathcal{B}_p(\mathcal{H}_\pi)}^p\right)^{1/p}\\\leq&\left(\sum_{\pi\in A}d_\pi\|\pi(y)\widehat{f}(\pi)-\widehat{f}(\pi)\|_{\mathcal{B}_p(\mathcal{H}_\pi)}^p\right)^{1/p}\\&+\left(\sum_{\pi\in\widehat{G}\setminus A}d_\pi\|\pi(y)\widehat{f}(\pi)-\widehat{f}(\pi)\|_{\mathcal{B}_p(\mathcal{H}_\pi)}^p\right)^{1/p}\\\leq&\sup_{\pi\in A}\|\pi(y)-I_{d_\pi}\|_{\mathcal{B}(\mathcal{H}_\pi)}\left(\sum_{\pi\in A}d_\pi\|\widehat{f}(\pi)\|_{\mathcal{B}_p(\mathcal{H}_\pi)}^p\right)^{1/p}\\&+\sup_{\pi\in \widehat{G}\setminus A}\|\pi(y)-I_{d_\pi}\|_{\mathcal{B}(\mathcal{H}_\pi)}\left(\sum_{\pi\in\widehat{G}\setminus A}d_\pi\|\widehat{f}(\pi)\|_{\mathcal{B}_p(\mathcal{H}_\pi)}^p\right)^{1/p}\\\leq&M\sup_{\pi\in A}\|\pi(y)-I_{d_\pi}\|_{\mathcal{B}(\mathcal{H}_\pi)}+\frac{\epsilon}{2},
	\end{align*}
	where $M$ is a positive number such that $\left(\sum_{\pi\in A}d_\pi\|\widehat{f}(\pi)\|_{\mathcal{B}_p(\mathcal{H}_\pi)}^p\right)^{1/p}\leq M.$
	
	Let $\pi\in A.$ Using continuity of $\pi,$ we obtain that there exists a neighbourhood $O_\pi$ of $e$ such that $$\|\pi(y)-I_{d_\pi}\|_{\mathcal{B}(\mathcal{H}_\pi)}<\frac{\epsilon}{2M},\ y\in O_\pi.$$ Assume that $O=\cap_{\pi\in A}O_\pi.$ Then, $$\|\pi(y)-I_{d_\pi}\|_{\mathcal{B}(\mathcal{H}_\pi)}<\frac{\epsilon}{2M},\ \pi\in A \mbox{ and } y\in O.$$ Hence, \begin{align*}
		\|R_yf-f\|_{p'}<&\epsilon,\ f\in K \mbox{ and } y\in O. \qedhere
	\end{align*}
\end{proof}

The following corollary is a generalization of \cite[Theorem 1]{P} studied on $\mathbb{R}^n$, and \cite[Theorem 1]{G} and \cite[Lemma 2.5]{Feich} studied on locally compact abelian groups. This is also an improvement of the corresponding result on compact abelian groups in the sense that we do not assume boundedness of the subset of $L^2(G).$ 

\begin{cor}\label{CharcCor}
	Let $K\subset L^2(G).$ Then, $K$ is uniformly $L^2(G)$-equicontinuous if and only if $\widehat{K}$ has uniform $\ell^2\mbox{-}\underset{\pi\in\widehat{G}}{\oplus}\mathcal{B}_2(\mathcal{H}_\pi)$-decay. 
\end{cor}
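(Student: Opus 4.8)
The plan is to read off the corollary as the common specialization of Lemma~\ref{decay} and Lemma~\ref{equicont} at the self-dual exponent. The relevant observation is that $p=2$ lies in the interval $[1,2]$ and satisfies $p'=2$, so both lemmas apply with this choice of $p$, and for this choice both their hypotheses and conclusions are expressed purely in terms of uniform $L^2(G)$-equicontinuity of $K$ and uniform $\ell^2\mbox{-}\underset{\pi\in\widehat{G}}{\oplus}\mathcal{B}_2(\mathcal{H}_\pi)$-decay of $\widehat{K}$. No new estimates are needed beyond what has already been established.

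First I would treat the forward implication. Assume $K\subset L^2(G)$ is uniformly $L^2(G)$-equicontinuous. Applying Lemma~\ref{decay} with $p=2$, hence $p'=2$, yields at once that $\widehat{K}$ has uniform $\ell^2\mbox{-}\underset{\pi\in\widehat{G}}{\oplus}\mathcal{B}_2(\mathcal{H}_\pi)$-decay.

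For the converse, suppose $\widehat{K}$ has uniform $\ell^2\mbox{-}\underset{\pi\in\widehat{G}}{\oplus}\mathcal{B}_2(\mathcal{H}_\pi)$-decay. Applying Lemma~\ref{equicont} with $p=2$, so again $p'=2$, gives that $K$ is uniformly $L^2(G)$-equicontinuous. Combining the two directions completes the argument.

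Since both lemmas are already in hand, I do not expect a genuine obstacle here; the only point requiring care is the bookkeeping of conjugate exponents, noting that at $p=2$ the Hausdorff-Young inequality invoked inside the lemmas becomes the Plancherel identity on $L^2(G)$ (up to the $d_\pi$-weighting built into the norms). In particular, exactly as in the two lemmas, no boundedness assumption on $K$ is imposed, which is precisely the promised improvement over the compact abelian case.
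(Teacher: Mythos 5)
Your proposal is correct and coincides with the paper's own argument: the corollary is proved there exactly by invoking Lemma~\ref{decay} and Lemma~\ref{equicont} at $p=2$, where $p'=2$. Your additional remarks on the conjugate-exponent bookkeeping and the absence of a boundedness assumption are accurate but not needed beyond that.
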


\begin{proof}
    This is a direct consequence of Lemma \ref{decay} and Lemma \ref{equicont}.
\end{proof}

Now, we prove some propositions that we use to establish the Pego theorem on compact groups. 

\begin{prop}\label{decayProp}
	Let $K\subset L^2(G).$ If $K$ is precompact then $\widehat{K}$ has uniform $\ell^2\mbox{-}\underset{\pi\in\widehat{G}}{\oplus}\mathcal{B}_2(\mathcal{H}_\pi)$-decay.
\end{prop}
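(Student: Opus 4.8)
The plan is to exploit the Plancherel theorem on compact groups, which gives an isometry $L^2(G) \cong \ell^2\text{-}\bigoplus_{\pi\in\widehat{G}}\mathcal{B}_2(\mathcal{H}_\pi)$, so that precompactness of $K$ is equivalent to precompactness of $\widehat{K}$ in the Hilbert space $\ell^2\text{-}\bigoplus_{\pi\in\widehat{G}}\mathcal{B}_2(\mathcal{H}_\pi)$. Thus it suffices to show: \emph{a precompact subset $F$ of $\ell^2\text{-}\bigoplus_{\pi\in\widehat{G}}\mathcal{B}_2(\mathcal{H}_\pi)$ has uniform decay}, i.e. the tails $\sum_{\pi\in\widehat{G}\setminus A}d_\pi\|\phi(\pi)\|_{\mathcal{B}_2(\mathcal{H}_\pi)}^2$ can be made uniformly small by choosing a finite $A$. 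This is the standard fact that in $\ell^2$-type spaces over a discrete index set, totally bounded sets are precisely bounded sets with uniformly small tails; I would prove it by a straightforward $\epsilon/3$-net argument.

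In detail: let $\epsilon>0$. Since $\widehat{K}$ is precompact (equivalently totally bounded, the ambient space being complete), cover it by finitely many balls of radius $\epsilon/2$ centered at points $\phi_1,\dots,\phi_N \in \widehat{K}$. Each $\phi_j$ lies in $\ell^2\text{-}\bigoplus_{\pi\in\widehat{G}}\mathcal{B}_2(\mathcal{H}_\pi)$, so the series $\sum_{\pi\in\widehat{G}}d_\pi\|\phi_j(\pi)\|_{\mathcal{B}_2(\mathcal{H}_\pi)}^2$ converges; hence there is a finite set $A_j\subset\widehat{G}$ with $\|\phi_j\|_{\ell^2\text{-}\bigoplus_{\pi\in\widehat{G}\setminus A_j}\mathcal{B}_2(\mathcal{H}_\pi)}<\epsilon/2$. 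Put $A=\bigcup_{j=1}^N A_j$, a finite subset of $\widehat{G}$. Now given any $f\in K$, pick $j$ with $\|\widehat{f}-\phi_j\|_{\ell^2\text{-}\bigoplus_{\pi\in\widehat{G}}\mathcal{B}_2(\mathcal{H}_\pi)}<\epsilon/2$; restricting the norm to the index set $\widehat{G}\setminus A$ only decreases it, and the triangle inequality on $\ell^2\text{-}\bigoplus_{\pi\in\widehat{G}\setminus A}\mathcal{B}_2(\mathcal{H}_\pi)$ gives
\begin{align*}
\|\widehat{f}\|_{\ell^2\text{-}\bigoplus_{\pi\in\widehat{G}\setminus A}\mathcal{B}_2(\mathcal{H}_\pi)} \leq \|\widehat{f}-\phi_j\|_{\ell^2\text{-}\bigoplus_{\pi\in\widehat{G}\setminus A}\mathcal{B}_2(\mathcal{H}_\pi)} + \|\phi_j\|_{\ell^2\text{-}\bigoplus_{\pi\in\widehat{G}\setminus A}\mathcal{B}_2(\mathcal{H}_\pi)} < \frac{\epsilon}{2} + \frac{\epsilon}{2} = \epsilon.
\end{align*}
Since $f\in K$ was arbitrary, $\widehat{K}$ has uniform $\ell^2\text{-}\bigoplus_{\pi\in\widehat{G}}\mathcal{B}_2(\mathcal{H}_\pi)$-decay.

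There is essentially no hard step here — the only thing to be careful about is the reduction via Plancherel, namely that $f\mapsto\widehat{f}$ is a surjective isometry of $L^2(G)$ onto $\ell^2\text{-}\bigoplus_{\pi\in\widehat{G}}\mathcal{B}_2(\mathcal{H}_\pi)$ (citing the Peter--Weyl/Plancherel theorem, e.g.\ \cite[Theorem 28.43]{HR}), so that isometries carry precompact sets to precompact sets. One could alternatively bypass Plancherel and combine Corollary \ref{CharcCor} with the (yet to be proved) fact that a precompact subset of $L^2(G)$ is uniformly $L^2(G)$-equicontinuous — this is the classical Weil/Riesz--Kolmogorov criterion — but the direct Plancherel route above is cleaner and self-contained given what is already available.
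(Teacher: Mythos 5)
Your proof is correct, but it takes a genuinely different route from the paper. The paper deduces the proposition in two steps that stay within its own framework: it invokes Weil's compactness theorem (precompactness in $L^2(G)$ implies uniform $L^2(G)$-equicontinuity) and then applies Corollary \ref{CharcCor}, whose proof (Lemma \ref{decay}) runs through Dirac nets, the Riemann--Lebesgue lemma, the Hausdorff--Young inequality and the Minkowski integral inequality. You instead use the Plancherel theorem to transport $K$ isometrically onto $\widehat{K}$ inside the Hilbert space $\ell^2\mbox{-}\underset{\pi\in\widehat{G}}{\oplus}\mathcal{B}_2(\mathcal{H}_\pi)$ and then prove, by an $\epsilon/2$-net argument, that totally bounded subsets of such weighted $\ell^2$-direct sums have uniformly small tails; this is exactly the alternative you flag in your closing remark, and it is sound (note that surjectivity of the Plancherel map is not even needed, since any isometry, indeed any uniformly continuous map, sends totally bounded sets to totally bounded sets). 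What your route buys is economy and self-containment for the case $p=2$: no Weil theorem, no equicontinuity, no Hausdorff--Young. What the paper's route buys is structural coherence: it passes through condition (iii) of Theorem \ref{Pego}, which must be established anyway for the full equivalence, and its Lemma \ref{decay} is formulated for all $p\in[1,2]$, where a Plancherel isometry is unavailable; moreover Weil's theorem is still indispensable in the paper for the converse direction (Proposition \ref{precomProp}), so your approach trims this proposition but not the overall dependency on Weil. The only small point to tidy is the citation: make sure the reference you give for the Plancherel theorem on compact groups in \cite{HR} (or alternatively \cite{F}) points to the correct statement, since the theorem number you quote should be verified.
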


\begin{proof}
	By the Weil theorem \cite[Pg. 52]{W} (or see \cite[Theorem 3.3]{GP19}), $K$ is uniformly $L^2(G)$-equicontinuous. Then, applying Corollary \ref{CharcCor}, we get that $\widehat{K}$ has uniform $\ell^2\mbox{-}\underset{\pi\in\widehat{G}}{\oplus}\mathcal{B}_2(\mathcal{H}_\pi)$-decay.
\end{proof}

The following is the converse to the previous proposition.

\begin{prop}\label{precomProp}
	Let $K$ be a bounded subset of $L^2(G).$ If $\widehat{K}$ has uniform $\ell^2\mbox{-}\underset{\pi\in\widehat{G}}{\oplus}\mathcal{B}_2(\mathcal{H}_\pi)$-decay then $K$ is precompact.
\end{prop}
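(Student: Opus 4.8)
The plan is to obtain precompactness as an immediate consequence of Weil's compactness theorem, once the hypothesis on $\widehat{K}$ has been translated into a translation-regularity statement about $K$ itself. Recall that Weil's theorem \cite[Pg. 52]{W} (see also \cite[Theorem 3.3]{GP19}) characterizes the precompact subsets of $L^2$ over a locally compact group: a bounded set $K$ is precompact in $L^2$ if and only if it is uniformly $L^2$-equicontinuous and uniformly tight, in the sense that for every $\epsilon>0$ there is a compact $C\subseteq G$ with $\int_{G\setminus C}|f|^2\,dm_G<\epsilon$ for all $f\in K$. The first point I would note is that when $G$ is compact the tightness condition is automatic (take $C=G$), so Weil's criterion in this setting reads simply: \emph{a bounded subset of $L^2(G)$ is precompact if and only if it is uniformly $L^2(G)$-equicontinuous}.

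With that in place the argument is short. Since $\widehat{K}$ has uniform $\ell^2\mbox{-}\underset{\pi\in\widehat{G}}{\oplus}\mathcal{B}_2(\mathcal{H}_\pi)$-decay, Corollary \ref{CharcCor} (equivalently, Lemma \ref{equicont} applied with $p=p'=2$) shows that $K$ is uniformly $L^2(G)$-equicontinuous. Combining this with the standing hypothesis that $K$ is bounded in $L^2(G)$, the reduced form of Weil's theorem recorded above yields at once that $K$ is precompact.

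The only step that demands any care is the correct invocation of Weil's theorem in the compact case, that is, verifying that the ``no escape of mass'' hypothesis present in the general locally compact formulation collapses to nothing when $G$ itself is compact, so that boundedness together with uniform $L^2$-equicontinuity is genuinely sufficient for precompactness. This is precisely the converse direction of the equivalence already exploited in Proposition \ref{decayProp}, so no analytic input beyond Lemmas \ref{decay}--\ref{equicont} and Weil's theorem is needed; the proof is essentially a one-line combination of Corollary \ref{CharcCor} with (the compact-group form of) Weil's theorem.
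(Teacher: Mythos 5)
Your argument is correct and matches the paper's proof: the paper likewise deduces uniform $L^2(G)$-equicontinuity from Corollary \ref{CharcCor}, observes that the tightness condition in Weil's theorem is vacuous on a compact group (phrased there as $\sup_{f\in K}\|f\chi_{G\setminus G}\|_2=0<\epsilon$), and then invokes Weil's theorem together with boundedness to conclude precompactness.
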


\begin{proof}
	Using Corollary \ref{CharcCor}, we have that $K$ is uniformly $L^2(G)$-equicontinuous. For any given $\epsilon>0$ we have that $$\sup_{f\in K}\|f\chi_{G\setminus G}\|_2=0<\epsilon.$$  Since $K$ is bounded, it follows by the Weil theorem \cite[Pg. 52]{W} (or see \cite[Theorem 3.1]{GP19}) that $K$ is precompact.
\end{proof}

Now, we present an analogue of the Pego theorem over compact groups. Combining Corollary \ref{CharcCor}, Proposition \ref{decayProp} and Proposition \ref{precomProp} gives the following theorem.
\begin{thm}\label{Pego}
    Let $K$ be a bounded subset of $L^2(G).$ Then, the following are equivalent:
    \begin{enumerate}[(i)]
        \item $K$ is precompact.
        \item $\widehat{K}$ has uniform $\ell^2\mbox{-}\underset{\pi\in\widehat{G}}{\oplus}\mathcal{B}_2(\mathcal{H}_\pi)$-decay.
        \item $K$ is uniformly $L^2(G)$-equicontinuous.
    \end{enumerate}
\end{thm}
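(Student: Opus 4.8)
The plan is to derive Theorem~\ref{Pego} purely by assembling the three results already established: Corollary~\ref{CharcCor}, Proposition~\ref{decayProp}, and Proposition~\ref{precomProp}. Since $K$ is assumed bounded in $L^2(G)$ throughout, I would prove the cycle of implications $(i)\Rightarrow(ii)\Rightarrow(iii)\Rightarrow(i)$, which is the most economical arrangement because each arrow is a direct citation.

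First, $(i)\Rightarrow(ii)$ is exactly Proposition~\ref{decayProp}: if $K$ is precompact in $L^2(G)$, then $\widehat{K}$ has uniform $\ell^2\mbox{-}\underset{\pi\in\widehat{G}}{\oplus}\mathcal{B}_2(\mathcal{H}_\pi)$-decay. Note that this implication does not even need the boundedness hypothesis, since precompactness already entails boundedness. Second, $(ii)\Rightarrow(iii)$ is the ``only if'' direction of Corollary~\ref{CharcCor} (equivalently Lemma~\ref{equicont} with $p=p'=2$): uniform decay of $\widehat{K}$ forces $K$ to be uniformly $L^2(G)$-equicontinuous. Third, $(iii)\Rightarrow(i)$ is Proposition~\ref{precomProp} combined with Corollary~\ref{CharcCor}: uniform $L^2(G)$-equicontinuity of $K$ yields (via Corollary~\ref{CharcCor}) uniform decay of $\widehat{K}$, and then $K$ bounded plus the trivial tightness condition $\sup_{f\in K}\|f\chi_{G\setminus G}\|_2 = 0$ lets Weil's compactness theorem conclude that $K$ is precompact. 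Here is the one place the boundedness hypothesis in the statement is genuinely used.

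There is essentially no obstacle, since all the analytic work (the Dirac net argument and Hausdorff--Young inequality in Lemma~\ref{decay}, the representation-continuity estimate in Lemma~\ref{equicont}, and the invocation of Weil's theorem) has been carried out in the lemmas and propositions. The only thing to be careful about is to cite the right arrow of Corollary~\ref{CharcCor} in each step and to note explicitly that compactness of $G$ makes the ``tightness at infinity'' hypothesis of Weil's theorem vacuous — $G\setminus G=\emptyset$, so the tail integral is identically zero. I would simply write:

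\begin{proof}
We prove $(i)\Rightarrow(ii)\Rightarrow(iii)\Rightarrow(i)$.

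$(i)\Rightarrow(ii)$: This is Proposition~\ref{decayProp}.

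$(ii)\Rightarrow(iii)$: This follows from Corollary~\ref{CharcCor}.

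$(iii)\Rightarrow(i)$: By Corollary~\ref{CharcCor}, $\widehat{K}$ has uniform $\ell^2\mbox{-}\underset{\pi\in\widehat{G}}{\oplus}\mathcal{B}_2(\mathcal{H}_\pi)$-decay, so Proposition~\ref{precomProp} applies and $K$ is precompact.
\end{proof}
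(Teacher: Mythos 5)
Your proof is correct and matches the paper, which itself proves Theorem~\ref{Pego} simply by combining Corollary~\ref{CharcCor}, Proposition~\ref{decayProp} and Proposition~\ref{precomProp}; your cycle $(i)\Rightarrow(ii)\Rightarrow(iii)\Rightarrow(i)$ is exactly that assembly, with the boundedness of $K$ used only in the final arrow, as it should be.
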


The following gives an example of a set $K\subset L^2(G)$ which is not precompact but $K$ is uniformly $L^2(G)$-equicontinuous and $\widehat{K}$ has uniform $\ell^2\mbox{-}\underset{\pi\in\widehat{G}}{\oplus}\mathcal{B}_2(\mathcal{H}_\pi)$-decay.

\begin{exam}
	Consider the set $K=\{n\chi_G:n\in\mathbb{N}\}\subset L^2(G)$ as given in \cite[Example 4.2]{GP19}. Since $K$ consists of only constant functions, it is clear that $K$ is uniformly $L^2(G)$-equicontinuous. By Corollary \ref{CharcCor}, $\widehat{K}$ has uniform $\ell^2\mbox{-}\underset{\pi\in\widehat{G}}{\oplus}\mathcal{B}_2(\mathcal{H}_\pi)$-decay. Since $K$ is not bounded, $K$ is not precompact.
\end{exam}

Now, with the help of our main result Theorem \ref{Pego}, we show that certain subsets of $L^2(G)$ are precompact.

\begin{exam}\mbox{ }
\begin{enumerate}[{\bf (i)}]
    \item Let $r\in\mathbb{R}.$ Consider the set $K=\{\frac{r}{n}\chi_G:n\in\mathbb{N}\}\subset L^2(G).$ Since $\{\frac{r}{n}:n\in\mathbb{N}\}$ is bounded and $K$ consists of only constant functions, it follows that $K$ is bounded and uniformly $L^2(G)$-equicontinuous.  Therefore, by Theorem \ref{Pego}, $K$ is precompact.

    \item Let $A$ be a finite subset of $\widehat{G}.$ Assume that $K$ is a bounded subset of the linear span of the set consisting of matrix entries \cite[Pg. 139]{F} of elements in $A.$ Since the matrix entries are bounded functions in $L^2(G),$ $K$ is bounded subset of $L^2(G).$ For $f\in K,$ using the Schur orthogonality relations \cite[Theorem 5.8]{F} we obtain that  $$\|\widehat{f}\|_{\ell^2\mbox{-}\underset{\pi\in\widehat{G}\setminus A}{\oplus}\mathcal{B}_2(\mathcal{H}_\pi)}=0.$$ Thus, $\widehat{K}$ has uniform $\ell^2\mbox{-}\underset{\pi\in\widehat{G}}{\oplus}\mathcal{B}_2(\mathcal{H}_\pi)$-decay. Hence, by Theorem \ref{Pego}, $K$ is precompact. In particular, the convex hull of the set consisting of matrix entries of elements in $A$ is precompact.
    
\end{enumerate}
\end{exam}

\section*{Acknowledgment}
	The author is supported by the NBHM post-doctoral fellowship with Ref. number: 0204/3/2021/R\&D-II/7356 from the Department of Atomic Energy (DAE), Government of India. The author is grateful to Sundaram Thangavelu for his useful comments.

\end{document}